\documentclass[12pt]{article}
\usepackage{amssymb,amsmath}
\usepackage{cases}
\usepackage{amsthm}
\usepackage{amsfonts}
\usepackage{graphicx}
\usepackage{cite,color,xcolor}
\usepackage[left=2.3cm,right=2.3cm,top=2.3cm,bottom=2.3cm]{geometry}
\usepackage[colorlinks,citecolor=blue,urlcolor=blue]{hyperref}
\usepackage[utf8]{inputenc}

\newtheorem{theorem}{Theorem}[section]
\newtheorem{corollary}{Corollary}[section]
\newtheorem{lemma}{Lemma}[section]

\newtheorem{remark}{Remark}[section]

\usepackage{txfonts}

\newcommand{\bal}{\begin{align}}
\newcommand{\bbal}{\begin{align*}}
\newcommand{\beq}{\begin{equation}}
\newcommand{\eeq}{\end{equation}}
\newcommand{\bca}{\begin{cases}}
\newcommand{\eca}{\end{cases}}

\newcommand{\pa}{\partial}
\newcommand{\fr}{\frac}

\newcommand{\De}{\Delta}

\newcommand{\cd}{\cdot}

\newcommand{\dd}{\mathrm{d}}

\newcommand{\R}{\mathbb{R}}

\newcommand\f{\left}
\newcommand\g{\right}
\linespread{1.0}

\begin{document}
\bibliographystyle{plain}
\title{Ill-posedness of the Novikov equation in the critical Besov space $B^{1}_{\infty,1}(\R)$}

\author{Jinlu Li$^{1}$, Yanghai Yu$^{2,}$\footnote{E-mail: lijinlu@gnnu.edu.cn; yuyanghai214@sina.com(Corresponding author); mathzwp2010@163.com} and Weipeng Zhu$^{3}$\\
\small $^1$ School of Mathematics and Computer Sciences, Gannan Normal University, Ganzhou 341000, China\\
\small $^2$ School of Mathematics and Statistics, Anhui Normal University, Wuhu 241002, China\\
\small $^3$ School of Mathematics and Big Data, Foshan University, Foshan, Guangdong 528000, China}

\date{\today}

\maketitle\noindent{\hrulefill}

{\bf Abstract:} It is shown that both the Camassa-Holm and Novikov equations are ill-posed in $B_{p,r}^{1+1/p}(\mathbb{R})$ with $(p,r)\in[1,\infty]\times(1,\infty]$ in \cite{Guo2019} and well-posed in $B_{p,1}^{1+1/p}(\mathbb{R})$ with $p\in[1,\infty)$ in \cite{Ye}. Recently, the ill-posedness for the Camassa-Holm equation in $B^{1}_{\infty,1}(\R)$ has been proved in \cite{Guo}. In this paper, we shall solve the only left an endpoint case $r=1$ for the Novikov equation. More precisely, we prove the ill-posedness for the Novikov equation in $B^{1}_{\infty,1}(\R)$ by exhibiting the norm inflation phenomena.

{\bf Keywords:} Novikov equation; Ill-posedness; Critical Besov space.

{\bf MSC (2010):} 35Q53, 37K10.
\vskip0mm\noindent{\hrulefill}

\section{Introduction}
Vladimir Novikov \cite{VN} investigated the question of integrability for Camassa-Holm type equations of the form
$$
(1-\partial_x^2) u_t=P\left(u, u_x, u_{x x}, u_{x x x}, \cdots\right),
$$
where $P$ is a polynomial of $u$ and its $x$-derivatives. Using as test for integrability the existence of an infinite hierarchy of (quasi-) local higher symmetries, he produced about 20 integrable equations with quadratic nonlinearities that include the Camassa-Holm $(\mathrm{CH})$ equation
\bal\label{ch}
(1-\partial_x^2) u_t=u u_{x x x}+2 u_x u_{x x}-3 u u_x
\end{align}
and the Degasperis-Procesi $(\mathrm{DP})$ equation
\bal\label{dp}
(1-\partial_x^2)  u_t=u u_{x x x}+3 u_x u_{x x}-4 u u_x .
\end{align}
Moreover, he produced about 10 integrable equations with cubic nonlinearities that include the following new equation (we may call it the Novikov equation (NE))
\bal\label{ne}
(1-\partial_x^2)  u_t=u^2 u_{x x x}+3 u u_x u_{x x}-4 u^2 u_x.
\end{align}

The Camassa-Holm equation was originally derived as a bi-Hamiltonian system by Fokas and Fuchssteiner \cite{Fokas1981} in the context of the KdV model and gained prominence after Camassa-Holm \cite{Camassa1993} independently re-derived
it from the Euler equations of hydrodynamics using asymptotic expansions. \eqref{ch} is completely integrable \cite{Camassa1993,Constantin-P} with a bi-Hamiltonian structure \cite{Constantin-E,Fokas1981} and infinitely many conservation laws \cite{Camassa1993,Fokas1981}. Also, it admits exact peaked soliton solutions (peakons) of the form $ce^{-|x-ct|}$ with $c>0$, which are orbitally stable \cite{Constantin.Strauss} and models wave breaking (i.e., the solution remains bounded, while its slope becomes unbounded in finite time \cite{Constantin,Escher2,Escher3}).
The Degasperis-Procesi equation with a bi-Hamiltonian structure is integrable \cite{DP} and has traveling wave solutions \cite{Lenells}. Although DP is similar to CH  in several aspects, these two equations are truly
different. One of the novel features of  DP different from  CH is that it has not only peakon solutions \cite{DP} and periodic peakon solutions \cite{YinJFA}, but also shock peakons \cite{Lundmark2007} and the periodic shock waves \cite{Escher}.

For the Novikov equation, Hone-Wang \cite{Home2008} derived the Lax pair  which is given by
\bbal
\left(\begin{array}{l}\psi_1 \\ \psi_2 \\ \psi_3\end{array}\right)_x=U(m, \lambda)\left(\begin{array}{l}\psi_1 \\ \psi_2 \\ \psi_3\end{array}\right), \quad\left(\begin{array}{l}\psi_1 \\ \psi_2 \\ \psi_3\end{array}\right)_t=V(m, u, \lambda)\left(\begin{array}{l}\psi_1 \\ \psi_2 \\ \psi_3\end{array}\right),
\end{align*}
where $m=u-u_{x x}$ and the matrices $U$ and $V$ are defined by
\bbal
U(m, \lambda)=\left(\begin{array}{ccc}
0 & \lambda m & 1 \\
0 & 0 & \lambda m \\
1 & 0 & 0
\end{array}\right)\quad\text{and}\quad V(m, u, \lambda)=\left(\begin{array}{lrr}
\frac{1}{3 \lambda^2}-u u_x & \frac{u_x}{\lambda}-\lambda m u^2 & u_x^2 \\
\frac{u}{\lambda} & -\frac{2}{3 \lambda^2} & -\frac{u_x}{\lambda}-\lambda m u^2 \\
-u^2 & \frac{u}{\lambda} & \frac{1}{3 \lambda^2}+u u_x
\end{array}\right) .
\end{align*}
NE possesses peakon traveling wave solutions \cite{HM,HLS,GH}, which on the real line are given by the formula
$
u(x, t)=\pm \sqrt{c} e^{-|x-c t|}
$
where $c>0$ is the wave speed. In fact, NE possesses multi-peakon traveling wave solutions on both the line and the circle. More precisely, on the line the $n$-peakon
$$
u(x, t)=\sum_{j=1}^n p_j(t) e^{-\left|x-q_j(t)\right|}
$$
is a solution to $\mathrm{NE}$ if and only if the positions $\left(q_1, \ldots, q_n\right)$ and the momenta $\left(p_1, \ldots, p_n\right)$ satisfy the following system of $2 n$ differential equations:
$$
\left\{\begin{aligned}
\frac{\dd q_j}{\dd t} &=u^2(q_j), \\
\frac{\dd p_j}{\dd t} &=-u(q_j) u_x(q_j) p_j .
\end{aligned}\right.
$$
Furthermore, it has infinitely many conserved quantities. Like CH, the most important quantity conserved by a solution $u$ to NE is its $H^1$-norm
$
\|u\|_{H^1}^2 =\int_{\mathbb{R}}(u^2+u_x^2) \dd x.
$

The well-posedness of the Camassa-Holm type equations has been widely investigated during the past 20 years.
The local well-posedness for the Cauchy problem of CH \cite{LO,GB,Dan2,L16} and NE \cite{HH,HH0,N,Wu1,Wu2,Yan1,Yan2} in Sobolev and Besov spaces $B_{p, r}^s(\mathbb{R})$ with  $s>\max\{1+1/{p}, 3/{2}\}$ and $(p,r)\in[1,\infty]\times[1,\infty)$ has been established.
In our recent papers \cite{Li22,Li22-jee}, we established the ill-posedness for CH in $B^s_{p,\infty}(\mathbb{R})$  with $p\in[1,\infty]$ by proving the solution map  starting from $u_0$ is discontinuous at $t = 0$ in the metric of $B^s_{p,\infty}(\mathbb{R})$.
Guo-Liu-Molinet-Yin \cite{Guo2019} established the ill-posedness for
the Camassa-Holm type equations in $B_{p,r}^{1+1/p}(\mathbb{R})$ with $(p,r)\in[1,\infty]\times(1,\infty]$ by proving the norm inflation, which implies that $B_{p, 1}^{1+1/p}$ is the critical Besov space for both CH and NE.  Ye-Yin-Guo \cite{Ye} obtained the local well-posedness for the Camassa-Holm type equation in critical Besov spaces $B^{1+1/p}_{p,1}(\R)$ with $p\in[1,\infty)$. We should mention that the well-posedness for DP in $B^{1}_{\infty,1}(\R)$ has been established in our recent paper \cite{LDP}. Very recently, Guo-Ye-Yin \cite{Guo} obtained the ill-posedness for CH in $B^{1}_{\infty,1}(\R)$ by constructing a special initial data which leads to the norm inflation. However, their initial data seems to be invalid when proving the ill-posedness for NE in $B^{1}_{\infty,1}(\R)$. To the best of our knowledge, whether NE is well-posed or not in in $B^{1}_{\infty,1}(\R)$ is still an open problem. We shall present the negative result in this paper.

Setting $\Lambda^{-2}=(1-\pa^2_x)^{-1}$, then $\Lambda^{-2}f=G*f$ where $G(x)=\fr12e^{-|x|}$ is the kernel of the operator $\Lambda^{-2}$. We can transform the Novikov equation into the following transport type equation
\begin{equation}\label{N}
\begin{cases}
u_t+u^2u_x=\mathbf{P}_1(u)+\mathbf{P}_2(u),\\
u(x,t=0)=u_0(x),
\end{cases}
\end{equation}
where
\begin{equation}\label{6}
\mathbf{P}_1(u)=-\frac12\Lambda^{-2}u_x^3\quad\text{and}\quad \mathbf{P}_2(u)=-\pa_x\Lambda^{-2}\left(\frac32uu^2_x+u^3\right).
\end{equation}

We can now state our main result as follows.
\begin{theorem}\label{th1}
For any $n\in \mathbb{Z}^+$ large enough, there exist $u_0$ and $T>0$ such that the Novikov equation has a solution $u\in \mathcal{C}([0,T);H^{\infty})$ satisfying
\bbal
\|u_0\|_{B^1_{\infty,1}}\leq \frac{1}{\log\log n}\quad\text{but}\quad
\|u(t_0)\|_{B^1_{\infty,1}}\geq {\log\log n}\quad\text{with}\quad t_0\in \left(0,\frac{1}{\log n}\right].
\end{align*}
\end{theorem}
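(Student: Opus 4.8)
The plan is to produce the norm inflation through the first Picard iterate of the Duhamel formula associated with \eqref{N}, choosing a two–scale initial datum whose low–frequency part drives, via the local nonlinearity $u^2u_x$, an amplification of a high–frequency wave packet.

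First I would fix a Schwartz bump $\phi$ with $\widehat{\phi}$ supported near the origin and set
\begin{align*}
u_0=u_\ell+u_h,\qquad u_\ell=\beta\,\phi(x),\qquad u_h=\alpha\,\phi(x)\cos(2^{N}x),
\end{align*}
where $N=N(n)\to\infty$ and $\alpha=\alpha(n)$, $\beta=\beta(n)$ are tuned so that $\alpha 2^{N}\simeq\beta\simeq 1/\log\log n$ (one may, if needed, superpose several such pieces at lacunary scales); then $\|u_0\|_{B^1_{\infty,1}}\lesssim 1/\log\log n$ because the two pieces live in well–separated dyadic bands. Since $u_0\in H^\infty$, the local well–posedness theory in high–regularity Besov spaces quoted above yields a solution $u\in\mathcal C([0,T);H^\infty)$, so the whole issue is to estimate $\|u(t_0)\|_{B^1_{\infty,1}}$ from below at a time $t_0\le 1/\log n$.

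Next I would expand $u(t)=u_0+t\,\mathcal N(u_0)+R(t)$, where $\mathcal N(u_0)=\mathbf P_1(u_0)+\mathbf P_2(u_0)-u_0^2\partial_xu_0$, and isolate the single resonant term carrying the growth. A product–to–sum computation shows that the local contribution $-u_\ell^2\partial_xu_h\approx\beta^2\alpha\,2^{N}\phi^3\sin(2^{N}x)$ deposits, in the dyadic band $j\simeq N$, a piece of size $\sim\beta^2\alpha\,2^{N}$ in $L^\infty$; all other interactions (the self–interactions, and every term filtered through $\partial_x\Lambda^{-2}$ or $\Lambda^{-2}$ in $\mathbf P_1,\mathbf P_2$, which costs a factor $2^{-2N}$ or $2^{-N}$ at frequency $2^{N}$, or else lands at frequency $O(1)$) are genuinely smaller. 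Hence
\begin{align*}
2^{N}\|\Delta_N\big(u_0+t_0\mathcal N(u_0)\big)\|_{L^\infty}\gtrsim t_0\,\beta^2\alpha\,2^{2N}-\alpha 2^{N}\gtrsim t_0\,2^{N}(\log\log n)^{-3},
\end{align*}
and this lower bound exceeds $\log\log n$ once $N$ is chosen so that $t_0 2^{N}\gtrsim(\log\log n)^4$ with $t_0\le 1/\log n$.

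The main obstacle is to show that the remainder is negligible, i.e. $\|R(t_0)\|_{B^1_{\infty,1}}=o(\log\log n)$. This is delicate precisely because the transport nonlinearity carries no smoothing, so a crude second Picard iterate of $-u^2\partial_x(\cdot)$ reproduces the same amplification factor and does not obviously close; it is this tension that forces the provable inflation to be only double–logarithmic. I would control $R$ by a bootstrap on $[0,t_0]$: estimate the flow of $u^2\partial_x$ in Lagrangian coordinates to avoid the loss of derivative, use paraproduct and commutator estimates together with the boundedness of $\Lambda^{-2}$ and $\partial_x\Lambda^{-2}$ on the relevant bands for the nonlocal terms $\mathbf P_1,\mathbf P_2$, and keep the amplification parameter $t_0\beta^2 2^{N}$ only moderately large (a small power of $\log\log n$) so that the series of higher iterates is summable and dominated by the first–order term. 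Combining the lower bound on the main term with this remainder estimate gives $\|u(t_0)\|_{B^1_{\infty,1}}\ge\log\log n$, which completes the proof.
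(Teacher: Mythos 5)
Your construction hinges on the wrong main term, and this is a genuine gap rather than a technicality. The term you isolate, $-u_\ell^2\partial_x u_h$, is the \emph{local transport} part of the equation, and transport by the Lipschitz velocity $u^2$ cannot inflate $\|\Delta_j u\|_{L^\infty}$: composing with the flow $\phi$ of $u^2$ (the very change of variables you invoke for the remainder) gives $\|\Delta_j u\circ\phi\|_{L^\infty}=\|\Delta_j u\|_{L^\infty}$, and in Lagrangian coordinates the whole term $u^2\partial_x u$ is absorbed into the commutator $[u^2,\Delta_j]\partial_x u$, which Lemma \ref{lem2.2} bounds by $C\|u\|_{C^{0,1}}\|u\|_{B^1_{\infty,1}}^2$ --- harmlessly small in your regime. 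In Eulerian terms the problem is that your lower bound forces $t_0\beta^2 2^N\gtrsim(\log\log n)^2\gg1$ (since $t_0\beta^2\alpha 2^{2N}\geq\log\log n$ with $\alpha 2^N\simeq\beta\simeq(\log\log n)^{-1}$), whereas the expansion $u_h(x-t u_\ell^2)\approx u_h-t\,u_\ell^2\partial_x u_h$ is valid only when $t\beta^2 2^N\ll1$. In your regime the iterates $(-t u_\ell^2\partial_x)^k u_h/k!$ are individually at least as large as your first-order term; they resum (with cancellation) to the sheared packet $u_h\circ\phi^{-1}$, whose local frequency $2^N\partial_x\phi^{-1}$ stays within $O(1)$ dyadic bands of $N$ and whose amplitude is unchanged, so $2^N\|\Delta_j(u_h\circ\phi^{-1})\|_{L^\infty}\sim\alpha 2^N\sim(\log\log n)^{-1}$: no inflation at all. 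Your own observation that the second iterate ``reproduces the same amplification factor'' is the symptom, and your proposed cure --- keeping $t_0\beta^2 2^N$ ``only moderately large'' so the series is ``summable and dominated by the first-order term'' --- is self-contradictory, because any choice making the first iterate reach $\log\log n$ makes the per-iterate ratio exceed $(\log\log n)^2$, so first-term domination is false, not merely unproven.

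The paper locates the inflation exactly where you discard it: in the nonlocal term $\mathbf{P}_2$, through $E_0=-\frac32\partial_x\Lambda^{-2}\big(u_0(\partial_x u_0)^2\big)$, which along the flow enters as a genuine source contributing $t\Delta_j E_0$. Your heuristic that anything filtered through $\partial_x\Lambda^{-2}$ is ``genuinely smaller'' fails at intermediate output frequencies, where $2^j\|\Delta_j\partial_x\Lambda^{-2}f\|_{L^\infty}\approx\|\Delta_j f\|_{L^\infty}$ with no loss: in the cubic low--high--high interaction $u_0^{\rm L}(\partial_x u_0^{\rm H})^2$, the two high-frequency factors beat against each other and deposit packets $\cos\big(2^{\ell+1}\gamma(x+2^{\ell+1}\gamma)\big)\check\chi^3$ exactly in the bands $j=\ell+1$, $\ell\in\mathbb{N}(n)$. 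Because the paper's datum carries $\sim n$ lacunary carriers $2^\ell\gamma$, these deposits occupy $\sim n$ \emph{distinct} dyadic blocks, and the $\ell^1$ summation over $j$ produces the factor $n$ behind Lemma \ref{le-e2}, i.e.\ $\|u_0(\partial_x u_0)^2\|_{B^0_{\infty,1}(\mathbb{N}(n))}\geq c(\log n)^2$, whence $t\,(\log n)^2\sim\log n\gg\log\log n$ at $t=1/\log n$. A single packet as in your construction deposits its low-frequency beat in $O(1)$ blocks of total size $O(\beta\alpha^2 2^{2N})=O((\log\log n)^{-3})$ --- too small, as you in effect note yourself. So both halves of your scheme need to be replaced: the growth must come from the $\ell^1$-accumulated low--high--high source inside $\partial_x\Lambda^{-2}(u u_x^2)$, and the smallness of everything else (the commutator $[u^2,\Delta_j]\partial_x u$, the term $F$, and the variation $\Delta_j E\circ\phi-\Delta_j E_0$, all controlled by powers of $\|u\|_{C^{0,1}}\lesssim n^{-1/3}\log n$) is then run through the contradiction hypothesis $\sup_{t\le 1/\log n}\|u\|_{B^1_{\infty,1}}<\log\log n$, as in Section \ref{sec3.3}.
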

Since the norm inflation implies discontinuous of the data-to-solution at the trivial function $u_0\equiv0$, Theorem \ref{th1} demonstrates that
\begin{corollary}\label{co}
The Cauchy problem for the Novikov equation is ill-posed in $B^1_{\infty,1}(\R)$ in the sense of Hadamard.
\end{corollary}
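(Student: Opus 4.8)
The plan is to exhibit an explicit initial datum built from a high-frequency "building block" plus possibly a lower-frequency seed, and then track how the quadratic/cubic nonlinearity transfers energy into a single dyadic shell in such a way that the $B^1_{\infty,1}$ norm, which sums $2^{j}\|\Delta_j u\|_{L^\infty}$ over all $j$, blows up after a very short time. Because the equation has been rewritten in the transport form \eqref{N} with the two nonlocal remainders $\mathbf{P}_1,\mathbf{P}_2$ of \eqref{6}, I would first argue that these remainders are harmless: each of $\mathbf{P}_1(u)$ and $\mathbf{P}_2(u)$ carries a factor $\Lambda^{-2}$ (or $\partial_x\Lambda^{-2}$, which is still a zeroth-order-improving, bounded map on $L^\infty$), so they gain derivatives and are controlled in $B^1_{\infty,1}$ by lower powers of the data; the genuinely dangerous term is the transport term $u^2 u_x$.

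Next I would design $u_0$ as a sum $u_0 = \sum_{k} a_k \,\phi(x)\cos(2^{n_k}x)$ of modulated high-frequency packets (or a single packet together with a slowly varying envelope), choosing the amplitudes $a_k$ and frequencies $2^{n_k}$ so that $\|u_0\|_{B^1_{\infty,1}}$ is as small as $1/\log\log n$: this forces the amplitudes to decay like a summable sequence against the $2^{j}$ weights. The mechanism I would exploit is the standard beating/resonance effect in $u^2 u_x$: multiplying two packets at frequencies $2^{n_j}$ and differentiating a packet at $2^{n_k}$ produces, by the product-to-sum identities, a contribution concentrated near the sum frequency $2^{n_j}+2^{n_k}$ (or a difference frequency), and crucially the derivative $\partial_x$ pulls down a factor of the large frequency, so a single iteration of the Duhamel/Picard scheme deposits an $O(1)$ amount of mass—amplified by $2^{n}$—into one new dyadic shell. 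Integrating over the short time $t_0\le 1/\log n$ and multiplying by the $B^1_{\infty,1}$ weight on that shell yields the lower bound $\|u(t_0)\|_{B^1_{\infty,1}}\ge \log\log n$.

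Concretely I would set up the proof as follows. First, establish local existence of a smooth solution $u\in\mathcal C([0,T);H^\infty)$ for the chosen smooth, rapidly decaying $u_0$ (this is classical and uses only that $u_0\in H^\infty$). Second, write $u(t)=u_0 - \int_0^t u^2 u_x\,\dd\tau + \int_0^t(\mathbf P_1+\mathbf P_2)(u)\,\dd\tau$ and split $u$ into its leading linear-transport part and a remainder, with the aim of showing that $u(t_0)$ is well approximated by $u_0 + t_0\,\mathcal N(u_0)$, where $\mathcal N$ isolates the resonant part of $-u_0^2\partial_x u_0$. Third, prove two quantitative estimates: (i) an \emph{upper} bound $\|u_0\|_{B^1_{\infty,1}}\lesssim 1/\log\log n$ for the constructed datum, and (ii) a \emph{lower} bound for the frequency-localized piece $2^{N}\|\Delta_N(u_0^2\partial_x u_0)\|_{L^\infty}\gtrsim \log\log n\cdot\log n$ on the resonant shell $N$, so that after multiplying by $t_0\simeq 1/\log n$ the inflated norm survives. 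Fourth, bound all error terms—the contributions of $\mathbf P_1,\mathbf P_2$, the higher Picard iterates, and the non-resonant part of the transport term—by something $o(\log\log n)$ uniformly on $[0,t_0]$, using the smoothing of $\Lambda^{-2}$ and product estimates in Besov spaces.

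The main obstacle I anticipate is precisely this last step: controlling the remainder $u(t_0) - \bigl(u_0 + t_0\,\mathcal N(u_0)\bigr)$ in $B^1_{\infty,1}$ strongly enough that the genuine inflation from the resonant shell is not masked. Because $B^1_{\infty,1}$ is critical (the transport estimate in this space is borderline and commutator terms barely close), the error analysis is delicate: I would need sharp bilinear/trilinear Besov product estimates adapted to the frequency-separated packets, and a careful bookkeeping of which frequency interactions feed the target shell $N$ versus which merely redistribute mass harmlessly. The fact—flagged in the introduction—that the datum used for CH in \cite{Guo} fails for NE suggests the cubic structure $u^2u_x$ (as opposed to $uu_x$) requires a genuinely different choice of building block, likely exploiting a triple-frequency resonance $2^{n_1}+2^{n_2}-2^{n_3}$ or a two-scale datum, and getting that combinatorics to produce a net $O(1)$ coefficient on the target shell is the crux of the argument.
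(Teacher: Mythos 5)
There is a genuine gap, and it lies precisely where you located your ``main obstacle'': your plan attributes the inflation to the wrong term and relies on a Picard/Duhamel expansion that cannot be closed in the critical space. First, the claim that $\mathbf{P}_1,\mathbf{P}_2$ in \eqref{6} are ``harmless'' because of the smoothing factor $\Lambda^{-2}$ is backwards. The term $\mathbf{P}_2(u)$ contains $\partial_x\Lambda^{-2}\big(\tfrac32 u u_x^2\big)$: the square $u_x^2$ involves two derivatives while $\partial_x\Lambda^{-2}$ gains only one, a net loss of one derivative, and since $u\in B^1_{\infty,1}$ only gives $u_x\in B^0_{\infty,1}$, which is not an algebra, this term is exactly critical. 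In the paper it is the \emph{only} source of inflation: the key Lemma \ref{le-e2} shows $\|u_0(\partial_x u_0)^2\|_{B^0_{\infty,1}(\mathbb{N}(n))}\geq c(\log n)^2$ for a two-scale datum, and the quantity that grows is $t\,\Delta_j E_0$ with $E_0=-\tfrac32\partial_x\Lambda^{-2}\big(u_0(\partial_xu_0)^2\big)$. By contrast, the transport term $u^2u_x=\tfrac13\partial_x(u^3)$, which you designate as ``the genuinely dangerous term,'' is neutralized in the paper by passing to Lagrangian coordinates along the flow of $u^2$ (Subsection \ref{sec3.2}), after which its only trace is the commutator $[u^2,\Delta_j]\partial_x u$, which Lemma \ref{lem2.2} bounds by $Cn^{-1/3}(\log n)^3$ --- negligibly small. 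The apparent $2^n$-amplified deposit you expect from one Picard iterate of $u^2\partial_xu$ is an artifact of derivative loss: the higher iterates are just as large, the expansion does not converge in $B^1_{\infty,1}$, and there is no way to bound the remainder $u(t_0)-\big(u_0+t_0\mathcal{N}(u_0)\big)$ by $o(\log\log n)$. You acknowledge this obstacle but offer no mechanism to overcome it; the paper's resolution is the change to flow coordinates via \eqref{l6}, which your proposal never invokes.

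Second, your quantitative target is structurally off. You seek a single resonant shell $N$ with $2^N\|\Delta_N(u_0^2\partial_xu_0)\|_{L^\infty}\gtrsim \log\log n\cdot\log n$, but with $\|u_0\|_{B^1_{\infty,1}}\to0$ each individual shell's gain is necessarily small; the paper instead modulates the high-frequency packet at $\sim n/8$ intermediate frequencies $2^\ell$, $\ell\in\mathbb{N}(n)$, so that the low-high-high interaction in $u(\partial_xu)^2$ deposits only $O((\log n)^2/n)$ into \emph{each} of $\sim n$ dyadic shells, and it is the $\ell^1$ summation over $\mathbb{N}(n)$ that produces the total $(\log n)^2$, which after multiplication by $t_0=1/\log n$ still dominates $\log\log n$. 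Your instinct about a two-scale datum and beat-frequency (high-high-to-low) resonances is sound --- the paper's $u_0=n^{-1/3}(u_0^{\rm H}+u_0^{\rm L})$ realizes exactly that --- but attached to the transport term and to a single-shell Duhamel bookkeeping, the argument as proposed would fail at both the lower-bound and the error-control steps.
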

This paper is structured as follows. In Section \ref{sec2}, we list some notations and known results and recall some Lemmas which will be used in the sequel. In Section \ref{sec3} we present the proof of Theorem \ref{th1} by dividing it into several parts:
(1) Construction of initial data;
(2) Key Estimation for Discontinuity;
(3) The Equation Along the Flow;
(4) Norm inflation.

\section{Preliminaries}\label{sec2}
\quad{\bf Notation}\;
 $C$ stands for some positive constant independent of $n$, which may vary from line to line.
 The symbol $A\approx B$ means that $C^{-1}B\leq A\leq CB$.
 We shall call a ball $B(x_0,r)=\{x\in \R: |x-x_0|\leq R\}$ with $R>0$ and an annulus $\mathcal{C}(0,r_1,r_2)=\{x\in \R: 0<r_1\leq|x|\leq r_2\}$ with $0 <r_1 <r_2$.
  Given a Banach space $X$, we denote its norm by $\|\cdot\|_{X}$. We shall use the simplified notation $\|f,\cdots,g\|_X=\|f\|_X+\cdots+\|g\|_X$ if there is no ambiguity.
 We will also define the Lipschitz space $C^{0,1}$ using the norm $\|f\|_{C^{0,1}}=\|f\|_{L^\infty}+\|\pa_xf\|_{L^\infty}$.
 For $I\subset\R$, we denote by $\mathcal{C}(I;X)$ the set of continuous functions on $I$ with values in $X$. Sometimes we will denote $L^p(0,T;X)$ by $L_T^pX$.
 Let us recall that for all $f\in \mathcal{S}'$, the Fourier transform $\widehat{f}$, is defined by
$$
(\mathcal{F} f)(\xi)=\widehat{f}(\xi)=\int_{\R}e^{-ix\xi}f(x)\dd x \quad\text{for any}\; \xi\in\R.
$$
 The inverse Fourier transform of any $g$ is given by
$$
(\mathcal{F}^{-1} g)(x)=\check{g}(x)=\frac{1}{2 \pi} \int_{\R} g(\xi) e^{i x \cdot \xi} \dd \xi.
$$

Next, we will recall some facts about the Littlewood-Paley decomposition and the nonhomogeneous Besov spaces (see \cite{B} for more details).
Let $\mathcal{B}:=\{\xi\in\mathbb{R}:|\xi|\leq 4/3\}$ and $\mathcal{C}:=\{\xi\in\mathbb{R}:3/4\leq|\xi|\leq 8/3\}.$
Choose a radial, non-negative, smooth function $\chi:\R\mapsto [0,1]$ such that it is supported in $\mathcal{B}$ and $\chi\equiv1$ for $|\xi|\leq3/4$. Setting $\varphi(\xi):=\chi(\xi/2)-\chi(\xi)$, then we deduce that $\varphi$ is supported in $\mathcal{C}$. Moreover,
\begin{eqnarray*}
\chi(\xi)+\sum_{j\geq0}\varphi(2^{-j}\xi)=1 \quad \mbox{ for any } \xi\in \R.
\end{eqnarray*}
We should emphasize that the fact $\varphi(\xi)\equiv 1$ for $4/3\leq |\xi|\leq 3/2$ will be used in the sequel.

For every $u\in \mathcal{S'}(\mathbb{R})$, the inhomogeneous dyadic blocks ${\Delta}_j$ are defined as follows
\begin{numcases}{\Delta_ju=}
0, & if $j\leq-2$;\nonumber\\
\chi(D)u=\mathcal{F}^{-1}(\chi \mathcal{F}u), & if $j=-1$;\nonumber\\
\varphi(2^{-j}D)u=\mathcal{F}^{-1}\f(\varphi(2^{-j}\cdot)\mathcal{F}u\g), & if $j\geq0$.\nonumber
\end{numcases}

Let $s\in\mathbb{R}$ and $(p,r)\in[1, \infty]^2$. The nonhomogeneous Besov space $B^{s}_{p,r}(\R)$ is defined by
\begin{align*}
B^{s}_{p,r}(\R):=\f\{f\in \mathcal{S}'(\R):\;\|f\|_{B^{s}_{p,r}(\mathbb{R})}:=\f\|2^{j s}\| \Delta_j u\|_{L^p}\g\|_{\ell^r(j\geq-1)}<\infty\g\}.
\end{align*}

The following Bernstein's inequalities will be used in the sequel.
\begin{lemma}[Lemma 2.1 in \cite{B}] \label{lem2.1} Let $\mathcal{B}$ be a Ball and $\mathcal{C}$ be an annulus. There exist constants $C>0$ such that for all $k\in \mathbb{N}\cup \{0\}$, any $\lambda\in \R^+$ and any function $f\in L^p$ with $1\leq p \leq q \leq \infty$, we have
\begin{align*}
&{\rm{supp}}\widehat{f}\subset \lambda \mathcal{B}\;\Rightarrow\; \|\pa_x^kf\|_{L^q}\leq C^{k+1}\lambda^{k+(\frac{1}{p}-\frac{1}{q})}\|f\|_{L^p},  \\
&{\rm{supp}}\widehat{f}\subset \lambda \mathcal{C}\;\Rightarrow\; C^{-k-1}\lambda^k\|f\|_{L^p} \leq \|\pa_x^kf\|_{L^p} \leq C^{k+1}\lambda^k\|f\|_{L^p}.
\end{align*}
\end{lemma}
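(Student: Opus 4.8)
The plan is to reduce both estimates to Young's convolution inequality applied to suitably rescaled kernels, exploiting the fact that the Fourier support of $f$ lies in a fixed dilate of a compact set. First I would fix, once and for all, a function $\phi\in\mathcal{S}(\R)$ equal to $1$ on a neighbourhood of the ball $\mathcal{B}$ (for the first estimate) and, for the second, a function $\widetilde\phi\in\mathcal{S}(\R)$ supported away from the origin and equal to $1$ on a neighbourhood of the annulus $\mathcal{C}$. The purpose of these cutoffs is that whenever $\mathrm{supp}\,\widehat f\subset\lambda\mathcal{B}$ one has $\widehat f(\xi)=\phi(\xi/\lambda)\widehat f(\xi)$, and similarly with $\widetilde\phi$ on the annulus, so the multiplier may be inserted at will. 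Throughout I would use the dilation rule $\mathcal{F}^{-1}[\psi(\cdot/\lambda)]=\lambda\,(\mathcal{F}^{-1}\psi)(\lambda\,\cdot)$ that follows directly from the definitions of $\widehat{\cdot}$ and $\check{\cdot}$ given above.

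For the ball case I would differentiate under this identity. Since $\widehat{\pa_x^kf}(\xi)=(i\xi)^k\phi(\xi/\lambda)\widehat f(\xi)=\lambda^k\psi_k(\xi/\lambda)\widehat f(\xi)$ with $\psi_k(\xi):=(i\xi)^k\phi(\xi)$, taking inverse Fourier transforms gives the representation
\begin{align*}
\pa_x^kf=\lambda^{k+1}\,g_k(\lambda\,\cdot)*f,\qquad g_k:=\mathcal{F}^{-1}\psi_k=\pa_x^k\,\mathcal{F}^{-1}\phi.
\end{align*}
Young's inequality with $1+\tfrac1q=\tfrac1r+\tfrac1p$ then yields $\|\pa_x^kf\|_{L^q}\le\lambda^{k+1}\|g_k(\lambda\,\cdot)\|_{L^r}\|f\|_{L^p}$, and since $\|g_k(\lambda\,\cdot)\|_{L^r}=\lambda^{-1/r}\|g_k\|_{L^r}$ with $\tfrac1r=1-(\tfrac1p-\tfrac1q)$, the powers of $\lambda$ collapse precisely to $\lambda^{k+(1/p-1/q)}$. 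Thus the first inequality is reduced to the uniform kernel bound $\|g_k\|_{L^r}\le C^{k+1}$.

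For the annulus the upper bound is just the case $p=q$ of the previous representation (the annulus sits inside a ball), so only the reverse inequality remains. Here I would divide instead of multiply: on $\mathrm{supp}\,\widehat f$ one has $\xi\neq0$, so writing $\widehat f(\xi)=(i\xi)^{-k}\widetilde\phi(\xi/\lambda)\widehat{\pa_x^kf}(\xi)=\lambda^{-k}\Theta_k(\xi/\lambda)\widehat{\pa_x^kf}(\xi)$ with $\Theta_k(\xi):=\widetilde\phi(\xi)(i\xi)^{-k}$ — which is smooth and compactly supported because $\widetilde\phi$ vanishes near $0$ — the dilation rule produces
\begin{align*}
f=\lambda^{-k+1}\,h_k(\lambda\,\cdot)*\pa_x^kf,\qquad h_k:=\mathcal{F}^{-1}\Theta_k.
\end{align*}
Young's inequality in the form $L^1*L^p\to L^p$ gives $\|f\|_{L^p}\le\lambda^{-k}\|h_k\|_{L^1}\|\pa_x^kf\|_{L^p}$, which is exactly the claimed lower bound once $\|h_k\|_{L^1}\le C^{k+1}$.

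The one genuinely delicate step — and where I expect the real effort to lie — is proving that the kernel norms $\|g_k\|_{L^r}$ and $\|h_k\|_{L^1}$ are bounded by $C^{k+1}$ with a single constant $C$ independent of $k$. The subtlety is that differentiating $(i\xi)^{k}$ or $(i\xi)^{-k}$ in $\xi$ releases combinatorial factors such as $k(k-1)\cdots$, which must be shown absorbable into the exponential rather than spoiling it. I would control them through a weighted $L^\infty$ estimate: since $\|F\|_{L^1}\le\|(1+x^2)^{-1}\|_{L^1}\,\|(1+x^2)F\|_{L^\infty}$ and $\mathcal{F}[(1+x^2)F]=(1-\pa_\xi^2)\widehat F$, it suffices to bound the $L^1_\xi$-norms of $\psi_k,\Theta_k$ together with their first two $\xi$-derivatives. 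On the compact Fourier support one has $|\xi|^{\pm k}\le R^{\pm k}$ for fixed radii, and each $\xi$-derivative contributes at most a linear-in-$k$ factor, so these norms are bounded by a polynomial in $k$ times $R^{k}$; any such polynomial factor is dominated by slightly enlarging the base of the exponential. Interpolating $\|g_k\|_{L^r}\le\|g_k\|_{L^1}^{1/r}\|g_k\|_{L^\infty}^{1-1/r}$ then delivers the uniform bound for every $r\in[1,\infty]$, completing both parts, in line with the argument in \cite{B}.
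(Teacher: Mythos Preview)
The paper does not supply its own proof of this lemma; it is quoted verbatim as Lemma~2.1 of \cite{B} and used as a black box. Your argument is correct and is essentially the standard proof given there---reduction to Young's inequality via rescaled Fourier multipliers, with the geometric growth $C^{k+1}$ of the kernel norms coming from the compact Fourier support---so there is nothing to compare.
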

\begin{lemma}[Lemma 2.100 in \cite{B}] \label{lem2.2}
Let $1 \leq r \leq \infty$, $1 \leq p \leq p_{1} \leq \infty$ and $\frac{1}{p_{2}}=\frac{1}{p}-\frac{1}{p_{1}}$. There exists a constant $C$ depending continuously on $p,p_1$, such that
$$
\left\|\left(2^{j}\left\|[\Delta_{j},v \pa_x] f\right\|_{L^{p}}\right)_{j}\right\|_{\ell^{r}} \leq C\left(\|\pa_x v\|_{L^{\infty}}\|f\|_{B_{p, r}^{1}}+\|\pa_x f\|_{L^{p_{2}}}\|\pa_x v\|_{B_{p_1,r}^{0}}\right).
$$
\end{lemma}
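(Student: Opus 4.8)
The plan is to prove this by combining Bony's paraproduct decomposition with the classical kernel/Taylor argument for the genuine commutator. The first observation is that, since $\pa_x$ commutes with the Fourier multiplier $\Delta_j$, one has the exact identity $[\Delta_j,v\pa_x]f=[\Delta_j,v]\pa_x f$, so the estimate reduces to controlling the standard commutator $[\Delta_j,v]g$ applied to $g=\pa_x f$. I would then insert Bony's decomposition $ab=T_ab+T_ba+R(a,b)$ into both $\Delta_j(vg)$ and $v\Delta_j g$ and reorganize, using again that $\pa_x$ and $\Delta_j$ commute, to obtain
$$[\Delta_j,v\pa_x]f=[\Delta_j,T_v]\pa_x f+\Delta_jT_{\pa_x f}v-T_{\pa_x\Delta_j f}v+\Delta_jR(v,\pa_x f)-R(v,\pa_x\Delta_j f).$$
The whole point of this grouping is that the only genuine cancellation sits in the first term, while the other four are lower order and will be bounded by the second factor on the right-hand side of the claimed inequality.

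The heart of the proof is the first term. Writing $T_vg=\sum_{j'}S_{j'-1}v\,\Delta_{j'}g$ with $S_{j'-1}$ the low-frequency cut-off, the spectral supports force $[\Delta_j,T_v]\pa_x f=\sum_{|j'-j|\le4}[\Delta_j,S_{j'-1}v]\Delta_{j'}\pa_x f$, an almost-diagonal sum. For each such $j'$ I would pass to the kernel representation $\Delta_jw(x)=\int 2^jh(2^j(x-y))w(y)\,\dd y$ with $h=\mathcal{F}^{-1}\varphi$, giving
$$[\Delta_j,S_{j'-1}v]\Delta_{j'}\pa_x f(x)=\int 2^jh(2^j(x-y))\big(S_{j'-1}v(x)-S_{j'-1}v(y)\big)\Delta_{j'}\pa_x f(y)\,\dd y.$$
A first-order Taylor expansion of the difference $S_{j'-1}v(x)-S_{j'-1}v(y)$ produces a factor $(x-y)$, and combining it with the bound $\int_{\R}|x|\,2^j|h(2^jx)|\,\dd x\approx2^{-j}$ extracts $\|\pa_xS_{j'-1}v\|_{L^\infty}\le\|\pa_x v\|_{L^\infty}$. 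Young's convolution inequality and Bernstein's Lemma \ref{lem2.1} (to convert the extra $\pa_x$ into $2^{j'}$) then yield $2^j\|[\Delta_j,T_v]\pa_x f\|_{L^p}\les\|\pa_x v\|_{L^\infty}\sum_{|j'-j|\le4}2^{j'}\|\Delta_{j'}f\|_{L^p}$, and the discrete Young inequality in $\ell^r$ produces exactly the first contribution $\|\pa_x v\|_{L^\infty}\|f\|_{B^1_{p,r}}$.

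For the remaining four terms I would use only standard paraproduct and remainder estimates together with H\"older under the splitting $\tfrac1p=\tfrac1{p_2}+\tfrac1{p_1}$, placing $\pa_x f$ in $L^{p_2}$ and the frequency blocks of $v$ in $L^{p_1}$. In $\Delta_jT_{\pa_x f}v$ the outer $\Delta_j$ restricts the internal sum to $|j'-j|\les1$, and rewriting $\|\Delta_{j'}v\|_{L^{p_1}}\approx2^{-j'}\|\Delta_{j'}\pa_x v\|_{L^{p_1}}$ shows the weight $2^j2^{-j'}\approx1$ is neutral, giving $\|\pa_x f\|_{L^{p_2}}\|\pa_x v\|_{B^0_{p_1,r}}$ after the $\ell^r$ sum. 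In $T_{\pa_x\Delta_j f}v$ and in the two remainder terms the relevant sum runs over blocks of $v$ at frequency $2^k$ with $k$ in a half-line relative to $j$; counting derivatives turns the net weight into a factor $2^{j-k}$, which is an absolutely summable sequence, so the discrete Young inequality again closes the estimate with the same bound. The low-frequency block $j'=-1$ (where the Bernstein lower bound is unavailable) is handled by a direct, trivial estimate.

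The step I expect to be most delicate is the bookkeeping of spectral supports that makes each sum over $j'$ (or $k$) almost-diagonal or absolutely summable, and in particular verifying that in the main term the Taylor gain $2^{-j}$ cancels the Besov weight $2^j$ precisely, so that $\|\pa_x v\|_{L^\infty}$ appears at the correct homogeneity with no residual power of $2^j$. The claimed continuous dependence of the constant on $p,p_1$ is then a matter of tracking the Bernstein and H\"older constants, which are uniform on the stated ranges, together with the $O(1)$ constant from Young's inequality for convolution of sequences.
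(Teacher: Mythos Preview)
The paper does not supply its own proof of this lemma: it is quoted verbatim as Lemma~2.100 of \cite{B} and used as a black box in Section~\ref{sec3}. Your proposal is precisely the standard argument given in \cite{B} for that result---Bony's paraproduct decomposition of $[\Delta_j,v]\pa_x f$, the kernel/Taylor trick for the main piece $[\Delta_j,T_v]\pa_x f$, and H\"older plus almost-diagonal summation for the remaining four terms---so there is nothing to compare and your sketch is correct.
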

\section{Proof of Theorem \ref{th1}}\label{sec3}
In this section, we prove Theorem \ref{th1}.
\subsection{Construction of Initial Data}\label{sec3.1}
Define a smooth cut-off function $\chi$ with values in $[0,1]$ which satisfies
\bbal
\chi(\xi)=
\bca
1, \quad \mathrm{if} \ |\xi|\leq \frac14,\\
0, \quad \mathrm{if} \ |\xi|\geq \frac12.
\eca
\end{align*}
From now on, we set $\gamma:=\frac{17}{24}$ just for the sake of simplicity. Letting
\bbal
n\in 16\mathbb{N}=\left\{16,32,48,\cdots\right\}\quad\text{and}\quad
\mathbb{N}(n)=\left\{k\in 8\mathbb{N}: \frac{n}4 \leq k\leq \frac{n}2\right\}.
\end{align*}
  We introduce the following new notation which will be used often throughout this paper
  \bbal
  \|f\|_{B^k_{\infty,1}\left(\mathbb{N}(n)\right)}=\sum_{j\in\mathbb{N}(n)}2^k\|\Delta_jf\|_{L^\infty},\quad k\in\{0,1\}.
  \end{align*}
Now, we can define the initial data $u_0$ by
\bbal
u_{0}&=n^{-\frac{1}{3}}\f(u^{\rm{H}}_{0}+u^{\rm{L}}_{0}\g),
\end{align*}
where
\bal
u^{\rm{H}}_{0}&:=2^{-n}\log n\sum_{\ell\in \mathbb{N}(n)}\cos\left(2^n\gamma(x+2^{\ell+1}\gamma)\right)\cd
\cos\left(2^{\ell}\gamma(x+2^{\ell+1}\gamma)\right)\cd\check{\chi}(x+2^{\ell+1}\gamma),\label{H}\\
u^{\rm{L}}_{0}&:=\sum_{\ell\in \mathbb{N}(n)}\check{\chi}(x+2^{\ell+1}\gamma).\label{L}
\end{align}
{\bf Some Observations}
\begin{enumerate}
  \item Obviously,
\bal\label{o2}
&\mathrm{supp} \ \widehat{u^{\rm{L}}_{0}}\subset \left\{\xi\in\R: \ |\xi|\leq \fr12\right\}.
\end{align}
  \item It is not difficult to check that
\bal\label{o0}
&\mathrm{supp} \ \mathcal{F}\f(\cos\left(2^{\ell}\gamma(x+2^{\ell+1}\gamma)\right)\cd\check{\chi}(x+2^{\ell+1}\gamma)\g)
\subset \left\{\xi\in\R: \ 2^{\ell}\gamma-\fr12\leq |\xi|\leq 2^{\ell}\gamma+\fr12\right\},
\end{align}
which gives in turn
\bal\label{o00}
&\mathrm{supp} \ \mathcal{F}\f(\cos\left(2^n\gamma(x+2^{\ell+1}\gamma)\right)\cd\cos\left(2^{\ell}\gamma(x+2^{\ell+1}\gamma)\right)
\cd\check{\chi}(x+2^{\ell+1}\gamma)\g)\nonumber\\
&\qquad\subset \left\{\xi\in\R: \ 2^{n}\gamma-2^{\ell}\gamma-\fr12\leq |\xi|\leq 2^{n}\gamma+2^{\ell}\gamma+\fr12\right\}.
\end{align}
Thus
\bal\label{o1}
&\mathrm{supp} \ \widehat{u^{\rm{H}}_{0}}\subset \left\{\xi\in\R: \ \fr43 2^{n-1}\leq |\xi|\leq \fr32 2^{n-1}\right\}.
\end{align}
  \item Since $\check{\chi}$ is a Schwartz function, we have
\bal\label{o3}
|\check{\chi}(x)|+|\pa_x\check{\chi}(x)|\leq C(1+|x|)^{-M}, \qquad  M\gg 1.
\end{align}
  \item By $\check{\chi}(0)=\frac{1}{2\pi}\int_{\R}\chi(x)\dd x$, we have
\bal\label{o4}
\left\|\cos\big(2^{\ell+1}\gamma (x+2^{\ell+1}\gamma)\big)\check{\chi}^3(x+2^{\ell+1}\gamma)\right\|_{L^\infty}\geq \check{\chi}^3(0)\geq\fr1{128\pi}.
\end{align}
\end{enumerate}

\begin{lemma}\label{le-e1}
There exists a positive constant $C$ independent of $n$ such that
\bbal
&2^{n}\|u^{\rm{H}}_{0}\|_{L^\infty}+\|\pa_xu^{\rm{H}}_{0}\|_{L^\infty}\leq C \log n,\\
&\|u^{\rm{L}}_{0}\|_{C^{0,1}}\leq C,\\
&\|u_0\|_{B^1_{\infty,1}}\leq C n^{-\frac13}\log n.
\end{align*}
\end{lemma}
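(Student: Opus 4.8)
The plan is to prove the three estimates in Lemma \ref{le-e1} by exploiting the structure of the initial data and the tools already assembled. The underlying principle is that each summand in $u_0^{\rm H}$ and $u_0^{\rm L}$ is a modulated Schwartz bump centered at $-2^{\ell+1}\gamma$, and these centers are spread far apart (spacing on the scale $2^{\ell+1}\gamma$), so the rapid decay \eqref{o3} makes the sum behave essentially like a single term in $L^\infty$.

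For the first estimate, I would start from the definition \eqref{H} and take $L^\infty$ norms. Since $|\cos(\cdots)|\le 1$, each summand is bounded by $2^{-n}\log n\,|\check\chi(x+2^{\ell+1}\gamma)|$. Using the decay \eqref{o3} and the fact that the centers $2^{\ell+1}\gamma$ are separated, the sum $\sum_{\ell\in\mathbb{N}(n)}|\check\chi(x+2^{\ell+1}\gamma)|$ is uniformly bounded by a constant independent of $n$ (the tails decay like $(1+|\mathrm{dist}|)^{-M}$ with $M\gg 1$, so the overlaps are summable). This gives $\|u_0^{\rm H}\|_{L^\infty}\le C2^{-n}\log n$, i.e.\ $2^n\|u_0^{\rm H}\|_{L^\infty}\le C\log n$. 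For $\|\pa_x u_0^{\rm H}\|_{L^\infty}$, I would differentiate a generic summand by the product rule. The dominant contribution comes from hitting the high-frequency factor $\cos(2^n\gamma(\cdot))$, producing a factor $2^n\gamma$ that cancels the prefactor $2^{-n}$; the remaining factors $\cos(2^\ell\gamma(\cdot))$ and $\check\chi(\cdot)$ and $\pa_x\check\chi(\cdot)$ are all $O(1)$ after the same summability argument, yielding the bound $C\log n$. The derivatives falling on the lower-frequency factors contribute only $O(2^{-n}2^\ell\log n)$ and $O(2^{-n}\log n)$, which are lower order.

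For the second estimate, \eqref{L} is just a sum of shifted copies of $\check\chi$, so $\|u_0^{\rm L}\|_{L^\infty}$ and $\|\pa_x u_0^{\rm L}\|_{L^\infty}$ follow immediately from \eqref{o3} and the same separation-of-centers summability, giving $\|u_0^{\rm L}\|_{C^{0,1}}\le C$. The third estimate is where the Besov structure enters: the frequency localizations are the key. By \eqref{o2}, $\widehat{u_0^{\rm L}}$ is supported in $|\xi|\le 1/2$, so $u_0^{\rm L}$ contributes only to $\Delta_{-1}$ and lives at $B^1_{\infty,1}$-level comparable to its $C^{0,1}$ norm, hence $O(n^{-1/3})$ after the prefactor. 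By \eqref{o1}, $\widehat{u_0^{\rm H}}$ is supported in the annulus $\tfrac43 2^{n-1}\le|\xi|\le\tfrac32 2^{n-1}$, which meets only $\Delta_{n-1}$ (and perhaps one neighbor); thus $\|u_0^{\rm H}\|_{B^1_{\infty,1}}\approx 2^n\|u_0^{\rm H}\|_{L^\infty}\le C\log n$ by the first estimate, so after multiplying by $n^{-1/3}$ we obtain $\|u_0\|_{B^1_{\infty,1}}\le Cn^{-1/3}\log n$.

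The main obstacle I anticipate is making the ``sum behaves like a single term'' step rigorous and uniform in $n$. Because the number of terms $|\mathbb{N}(n)|$ grows like $n$, one must verify that the cross-tails genuinely sum to an $n$-independent constant rather than accumulating a factor of $n$; this is where the strong decay $M\gg 1$ in \eqref{o3} together with the geometric spacing of the centers $\{2^{\ell+1}\gamma:\ell\in\mathbb{N}(n)\}$ must be used carefully, estimating $\sum_\ell (1+|x+2^{\ell+1}\gamma|)^{-M}\le C$ uniformly in $x$ and $n$. A secondary point is confirming that the annulus in \eqref{o1} is captured by a single dyadic block, which is exactly why the problem fixes $\gamma=\tfrac{17}{24}$ and records that $\varphi\equiv1$ on $[4/3,3/2]$ in the preliminaries; this pins down the precise constant in $\|u_0^{\rm H}\|_{B^1_{\infty,1}}\approx 2^n\|u_0^{\rm H}\|_{L^\infty}$.
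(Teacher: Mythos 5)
Your proposal is correct and follows essentially the same route as the paper: pointwise bounds on the shifted Schwartz bumps via the decay \eqref{o3} and the uniform summability of $\sum_{\ell\in\mathbb{N}(n)}(1+|x+2^{\ell+1}\gamma|)^{-M}$, followed by the frequency-support observations \eqref{o1}--\eqref{o2} to reduce the Besov norm to $2^n\|u_0^{\rm H}\|_{L^\infty}+\|u_0^{\rm L}\|_{L^\infty}$. The only cosmetic difference is that you bound $\|\pa_xu_0^{\rm H}\|_{L^\infty}$ by the product rule, whereas the paper gets it in one line from Bernstein's inequality (Lemma \ref{lem2.1}) using the annulus support \eqref{o1}; both yield the same estimate.
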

\begin{proof}
Due to \eqref{o1}-\eqref{o3}, by Bernstein's inequality, we have
\bbal
2^{n}\|u^{\rm{H}}_{0}\|_{L^\infty}&+\|\pa_xu^{\rm{H}}_{0}\|_{L^\infty}+\log n\|u^{\rm{L}}_{0}\|_{C^{0,1}}
\\&\leq C\log n\left\|\sum_{\ell\in \mathbb{N}(n)}\frac{1}{(1+|x+2^{\ell+1}\gamma|)^M}\right\|_{L^\infty}\\
&\leq C\log n
\end{align*}
and
\bbal
\|u_0\|_{B^1_{\infty,1}}&\leq C n^{-\frac13}\f(\|u^{\rm{H}}_{0}\|_{B^1_{\infty,1}}+\|u^{\rm{L}}_{0}\|_{B^1_{\infty,1}}\g)\\
&\leq C n^{-\frac13}\f(2^n\|u^{\rm{H}}_{0}\|_{L^{\infty}}+\|u^{\rm{L}}_{0}\|_{L^{\infty}}\g)\\
&\leq C n^{-\frac13}\log n.
\end{align*}
This completes the proof of Lemma \ref{le-e1}.
\end{proof}
\subsection{Key Estimation for Discontinuity}
The following Lemma is crucial for the proof of Theorem \ref{th1}.
\begin{lemma}\label{le-e2}
There exists a positive constant $c$ independent of $n$ such that
\bbal
\left\|u_0(\pa_xu_0)^2\right\|_{B^0_{\infty,1}\left(\mathbb{N}(n)\right)}\geq c(\log n)^2, \qquad n\gg1.
\end{align*}
\end{lemma}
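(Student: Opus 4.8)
The plan is to isolate the single high-frequency product that dominates $u_0(\pa_x u_0)^2$ and to show that each Littlewood--Paley block $\Delta_\ell$ with $\ell\in\mathbb{N}(n)$ captures a piece of size $n^{-1}(\log n)^2$; since $\#\mathbb{N}(n)\approx n/32$, summing over the admissible frequencies produces the claimed $(\log n)^2$. Throughout I abbreviate $y_\ell:=x+2^{\ell+1}\gamma$.

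First I would differentiate $u_0^{\rm H}$ and retain only the term in which $\pa_x$ hits the fast factor $\cos(2^n\gamma\,\cdot)$: one writes $\pa_x u_0^{\rm H}=V_n+R_n$, where
\[
V_n=-\gamma\log n\sum_{\ell\in\mathbb{N}(n)}\sin\big(2^n\gamma y_\ell\big)\cos\big(2^\ell\gamma y_\ell\big)\check{\chi}(y_\ell),
\]
and $R_n$ collects the derivatives falling on $\cos(2^\ell\gamma\,\cdot)$ or on $\check{\chi}$, which carry an extra factor $2^{\ell-n}$ or $2^{-n}$; hence $\|R_n\|_{L^\infty}\lesssim 2^{-n/2}\log n$ by \eqref{o3} and Lemma \ref{le-e1}. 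Since $u_0=n^{-1/3}(u_0^{\rm H}+u_0^{\rm L})$ with $\|u_0^{\rm H}\|_{L^\infty}\lesssim 2^{-n}\log n$, $\|u_0^{\rm L}\|_{C^{0,1}}\lesssim1$ and $\|V_n\|_{L^\infty}\lesssim\log n$, expanding the cube shows that every discarded contribution contains either the small factor $u_0^{\rm H}$ or at most one power of $V_n$, so
\[
u_0(\pa_x u_0)^2=n^{-1}u_0^{\rm L}V_n^2+E,\qquad \|E\|_{L^\infty}\lesssim n^{-1}\log n.
\]

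Next I would reduce $u_0^{\rm L}V_n^2$ to its diagonal. The centres $-2^{\ell+1}\gamma$ are geometrically separated (distinct $\ell,\ell'\in\mathbb{N}(n)\subset 8\mathbb{N}$ differ by at least $8$, and $\ell\geq n/4$), so the Schwartz decay \eqref{o3} bounds every cross product $\check{\chi}(y_\ell)\check{\chi}(y_{\ell'})$, $\ell\ne\ell'$, by $C(1+2^{n/4})^{-M}$. Thus the off-diagonal part of $V_n^2$, together with the contributions $\sum_{\ell'\ne\ell}\check{\chi}(y_{\ell'})$ coming from $u_0^{\rm L}$, is negligible, and after using $\sin^2 A=\tfrac12(1-\cos2A)$ and $\cos^2 B=\tfrac12(1+\cos2B)$ one finds
\[
u_0(\pa_x u_0)^2=n^{-1}\gamma^2(\log n)^2\sum_{\ell\in\mathbb{N}(n)}\sin^2\big(2^n\gamma y_\ell\big)\cos^2\big(2^\ell\gamma y_\ell\big)\check{\chi}^3(y_\ell)+\widetilde E,
\]
with $\|\widetilde E\|_{L^\infty}\lesssim n^{-1}\log n$. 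Each summand splits into a low-frequency piece ($\check{\chi}^3$), a piece of frequency $\approx 2^{\ell+1}\gamma=2^\ell\cdot\tfrac{17}{12}$, and pieces of frequency $\approx2^{n+1}\gamma$ and $2^{n+1}\gamma\pm2^{\ell+1}\gamma$.

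The decisive step, which I expect to be the main obstacle, is the spectral localization. Because $\gamma=\tfrac{17}{24}$ is chosen so that $2^{\ell+1}\gamma=2^\ell\cdot\tfrac{17}{12}$ lies strictly inside the interval $[\tfrac43 2^\ell,\tfrac32 2^\ell]$ on which $\varphi(2^{-\ell}\cdot)\equiv1$, and because $\check{\chi}^3$ has spectrum in $[-\tfrac32,\tfrac32]$, for $n\gg1$ the operator $\Delta_\ell$ acts as the identity on $\cos(2^{\ell+1}\gamma y_\ell)\check{\chi}^3(y_\ell)$ while annihilating every other frequency present: the low-frequency piece ($\lesssim\tfrac32\ll2^\ell$), the fast pieces ($\approx2^{n+1}\gamma\gg2^\ell$), and, crucially, the frequency $2^{\ell'+1}\gamma$ of any other index $\ell'\ne\ell$, since $\ell'-\ell\in8\mathbb{Z}\setminus\{0\}$ forces $2^{\ell'-\ell}\cdot\tfrac{17}{12}\notin[\tfrac34,\tfrac83]$, i.e.\ outside the $\ell$-th annulus. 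Consequently
\[
\Delta_\ell\big[u_0(\pa_x u_0)^2\big]=\tfrac14 n^{-1}\gamma^2(\log n)^2\cos\big(2^{\ell+1}\gamma y_\ell\big)\check{\chi}^3(y_\ell)+\Delta_\ell\widetilde E,
\]
and \eqref{o4} gives $\|\cos(2^{\ell+1}\gamma y_\ell)\check{\chi}^3(y_\ell)\|_{L^\infty}\geq\tfrac1{128\pi}$. As $\Delta_\ell$ is bounded on $L^\infty$ uniformly in $\ell$, summing over the $\approx n/32$ indices of $\mathbb{N}(n)$ yields a main term $\gtrsim n\cdot n^{-1}(\log n)^2=(\log n)^2$ against a total error $\lesssim n\cdot n^{-1}\log n=\log n$, whence $\|u_0(\pa_x u_0)^2\|_{B^0_{\infty,1}(\mathbb{N}(n))}\geq c(\log n)^2$ for $n\gg1$.
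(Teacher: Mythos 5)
Your proposal is correct and follows essentially the same route as the paper's proof: the same splitting of $\pa_xu_0^{\rm H}$ into the dominant term $V_n$ (where the derivative hits $\cos(2^n\gamma\,\cdot)$) plus a $O(2^{-n/2}\log n)$ remainder, the same diagonal reduction via the separation of the centres $-2^{\ell+1}\gamma$, the same trigonometric expansion isolating the frequency $2^{\ell+1}\gamma=\tfrac{17}{12}2^{\ell}$ on which $\Delta_\ell$ acts as the identity, and the same lower bound via \eqref{o4} summed over the $\approx n/32$ indices of $\mathbb{N}(n)$. The only (immaterial) difference is bookkeeping: you absorb all discarded terms into a single error with the crude bound $\|\Delta_j E\|_{L^\infty}\lesssim\|E\|_{L^\infty}\lesssim n^{-1}\log n$, accepting a total error $O(\log n)$, whereas the paper additionally shows that several pieces (e.g.\ $\mathbf{I}_3$, $\mathbf{I}_{112}$--$\mathbf{I}_{114}$) are annihilated identically by $\Delta_j$ for $j\in\mathbb{N}(n)$, leaving only exponentially small errors --- both budgets are $o((\log n)^2)$, so the conclusion is the same.
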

\begin{proof}
Obviously,
\bbal
n\cd u_0(\pa_xu_0)^2&=\underbrace{u^{\rm{L}}_{0}(\pa_xu^{\rm{H}}_{0})^2}_{=:\;\mathbf{I}_1}
+\underbrace{u^{\rm{H}}_{0}(\pa_xu^{\rm{H}}_{0}+\pa_xu^{\rm{L}}_{0})^2}_{=:\;\mathbf{I}_2}
+\underbrace{u^{\rm{L}}_{0}\f((\pa_xu^{\rm{L}}_{0})^2+2\pa_xu^{\rm{H}}_{0}\pa_xu^{\rm{L}}_{0}\g)}_{=:\;\mathbf{I}_3}.
\end{align*}
Next, we need to estimate the above three terms.

{\bf Estimation of $\mathbf{I}_2$.} Using Lemma \ref{le-e1} yields
\bbal
&\left\|\mathbf{I}_2\right\|_{B^0_{\infty,1}\left(\mathbb{N}(n)\right)}\leq C n \|\mathbf{I}_2\|_{L^\infty}\leq n\|u^{\rm{H}}_{0}\|_{L^\infty}\|\pa_xu^{\rm{H}}_{0},\pa_xu^{\rm{L}}_{0}\|^2_{L^{\infty}}\leq Cn2^{-n}(\log n)^2.
\end{align*}

{\bf Estimation of $\mathbf{I}_3$.}
Notice that the support conditions \eqref{o1} and \eqref{o2}, one has
\bbal
&\De_j\mathbf{I}_3=0\quad\text{for}\; j\in \mathbb{N}(n)\quad\Rightarrow\quad
\left\|\mathbf{I}_3\right\|_{B^0_{\infty,1}\left(\mathbb{N}(n)\right)}=0.
\end{align*}

{\bf Estimation of $\mathbf{I}_1$.} Now we focus on the estimation of $\mathbf{I}_1$.
Obviously,
\bbal
\pa_xu^{\rm{H}}_{0}&=-\gamma\log n\sum_{\ell\in \mathbb{N}(n)}\sin\left(2^n\gamma(x+2^{\ell+1}\gamma)\right)\cd
\cos\left(2^{\ell}\gamma(x+2^{\ell+1}\gamma)\right)\cd\check{\chi}(x+2^{\ell+1}\gamma)\\
&\quad+2^{-n}\log n\sum_{\ell\in \mathbb{N}(n)}\cos\left(2^n\gamma(x+2^{\ell+1}\gamma)\right)\cd\pa_x\Big(
\cos\left(2^{\ell}\gamma(x+2^{\ell+1}\gamma)\right)\cd\check{\chi}(x+2^{\ell+1}\gamma)\Big).
\end{align*}
We decompose $\mathbf{I}_1$ into three terms
\bbal
\mathbf{I}_{1}=(\log n)^2\f(\mathbf{I}_{11}+\mathbf{I}_{12}-\mathbf{I}_{13}\g),
\end{align*}
where
\bbal
\mathbf{I}_{11}&=\gamma^2u^{\rm{L}}_{0}\f(\sum_{\ell\in \mathbb{N}(n)}\sin\left(2^n\gamma(x+2^{\ell+1}\gamma)\right)\cd
\cos\left(2^{\ell}\gamma(x+2^{\ell+1}\gamma)\right)\cd\check{\chi}(x+2^{\ell+1}\gamma)\g)^2,
\\ \mathbf{I}_{12}&=2^{-2n}u^{\rm{L}}_{0}\f(\sum_{\ell\in \mathbb{N}(n)}\cos\left(2^n\gamma(x+2^{\ell+1}\gamma)\right)\cd\pa_x\Big(
\cos\left(2^{\ell}\gamma(x+2^{\ell+1}\gamma)\right)\cd\check{\chi}(x+2^{\ell+1}\gamma)\Big)\g)^2,
\\ \mathbf{I}_{13}&=2\gamma2^{-n}u^{\rm{L}}_{0}\sum_{\ell\in \mathbb{N}(n)}\sin\left(2^n\gamma(x+2^{\ell+1}\gamma)\right)\cd
\cos\left(2^{\ell}\gamma(x+2^{\ell+1}\gamma)\right)\cd\check{\chi}(x+2^{\ell+1}\gamma)\\
&\quad\times\sum_{\ell\in \mathbb{N}(n)}\cos\left(2^n\gamma(x+2^{\ell+1}\gamma)\right)\cd\pa_x\Big(
\cos\left(2^{\ell}\gamma(x+2^{\ell+1}\gamma)\right)\cd\check{\chi}(x+2^{\ell+1}\gamma)\Big).
\end{align*}
Easy computations give that
\bbal
\|\mathbf{I}_{12}\|_{L^\infty}&\leq C2^{-2n}\|u^{\rm{L}}_{0}\|_{L^\infty}\left\|\sum_{\ell\in \mathbb{N}(n)}\pa_x\Big(
\cos\left(2^{\ell}\gamma(x+2^{\ell+1}\gamma)\right)\cd\check{\chi}(x+2^{\ell+1}\gamma)\Big)\right\|^2_{L^\infty}
\\&\leq C2^{-2n}\left\|\sum_{\ell\in \mathbb{N}(n)}\frac{2^{\ell}}{(1+|x+2^{\ell+1}\gamma|)^M}\right\|^2_{L^\infty}\\
&\leq C2^{-n}.
\end{align*}
Similarly, we have
$
\|\mathbf{I}_{13}\|_{L^\infty}\leq C 2^{-\frac{n}2}.
$
Thus
\bbal
&\left\|\mathbf{I}_{12},\mathbf{I}_{13}\right\|_{B^0_{\infty,1}\left(\mathbb{N}(n)\right)}\leq C n 2^{-\frac{n}2}.
\end{align*}
By the simple equality $\sin^2(a)\cos^2(b)=\fr14(1-\cos(2a))(1+\cos(2b))$, we break $\mathbf{I}_{11}$ down into some easy-to-handle terms
\bbal
\mathbf{I}_{11}=\gamma^2\sum_{i=1}^5\mathbf{I}_{11i},\quad\text{where}
\end{align*}
\bbal
\mathbf{I}_{111}&=\frac14u^{\rm{L}}_{0}\sum_{\ell\in \mathbb{N}(n)}\cos\big(2^{\ell+1}\gamma (x+2^{\ell+1}\gamma)\big)\check{\chi}^2(x+2^{\ell+1}\gamma) ,\\
\mathbf{I}_{112}&= \frac14u^{\rm{L}}_{0}\sum_{\ell\in \mathbb{N}(n)}\check{\chi}^2(x+2^{\ell+1}\gamma),
\\
\mathbf{I}_{113}&=-\frac14u^{\rm{L}}_{0}\sum_{\ell\in \mathbb{N}(n)}\cos\big(2^{n+1}\gamma (x+2^{\ell+1}\gamma)\big)\cd\check{\chi}^2(x+2^{\ell+1}\gamma),
\\
\mathbf{I}_{114}&= -\frac14u^{\rm{L}}_{0}\sum_{\ell\in \mathbb{N}(n)}\cos\big(2^{n+1}\gamma (x+2^{\ell+1}\gamma)\big)\cd\cos\big(2^{\ell+1}\gamma (x+2^{\ell+1}\gamma)\big)\cd\check{\chi}^2(x+2^{\ell+1}\gamma),
\\
\mathbf{I}_{115}&=u^{\rm{L}}_{0}\sum_{\ell,j\in \mathbb{N}(n)\atop\ell\neq j}\Big(\sin\left(2^n\gamma(x+2^{\ell+1}\gamma)\right)\cd
\cos\left(2^{\ell}\gamma(x+2^{\ell+1}\gamma)\right)\cd\check{\chi}(x+2^{\ell+1}\gamma)
  \\& \quad  \times \sin\left(2^n\gamma(x+2^{j+1}\gamma)\right)\cd
\cos\left(2^{j}\gamma(x+2^{j+1}\gamma)\right)\cd\check{\chi}(x+2^{j+1}\gamma)\Big).
\end{align*}
Notice that the support conditions \eqref{o0} and \eqref{o00}, one has
\bbal
&\De_j\mathbf{I}_{112}=\De_j\mathbf{I}_{113}=\De_j\mathbf{I}_{114}=0\quad\text{for}\; j\in \mathbb{N}(n),
\end{align*}
which implies directly that
\bbal
&\left\|\mathbf{I}_{112},\mathbf{I}_{113},\mathbf{I}_{114}\right\|_{B^0_{\infty,1}\left(\mathbb{N}(n)\right)}=0.
\end{align*}
Using Lemma \ref{le-e1}, we have
\bbal
\|\mathbf{I}_{115}\|_{B^0_{\infty,1}(\mathbb{N}(n))}&\leq Cn\|\mathbf{I}_{115}\|_{L^\infty}
\leq Cn\|u^{\rm{L}}_{0}\|_{L^\infty}\sum_{\ell,j\in \mathbb{N}(n)\atop
\ell\neq j}\left\|\check{\chi}(x+2^{\ell+1}\gamma)
  \cd\check{\chi}(x+2^{j+1}\gamma)\right\|_{L^\infty}
\\&\leq Cn\sum_{j>\ell\in \mathbb{N}(n)}\left\|(1+|x+2^{j+1}\gamma|^2)^{-M}(1+|x+2^{\ell+1}\gamma|^2)^{-M}\right\|_{L^\infty}\\
&\leq Cn\sum_{j>\ell\in \mathbb{N}(n)}\left\|(1+|x|^2)^{-M}(1+|x-(2^{j+1}-2^{\ell+1})\gamma|^2))^{-M}\right\|_{L^\infty}\\&\leq Cn\sum_{j>\ell\in \mathbb{N}(n)}\left(\gamma (2^{j}-2^{\ell})\right)^{-2M}
\\&\leq Cn2^{-\frac{nM}{2}},
\end{align*}
where we have separated $\R$ into two different regions $\{x: |x|\leq\gamma (2^{j}-2^{\ell})\}$ and $\{x: |x| > \gamma (2^{j}-2^{\ell})\}$.

Finally, we can break $\mathbf{I}_{111}$ down into two parts, where the first part  contributes the main part.
\bbal
\mathbf{I}_{111}&=\sum_{\ell\in \mathbb{N}(n)}\cos\big(2^{\ell+1}\gamma (x+2^{\ell+1}\gamma)\big)\check{\chi}^3(x+2^{\ell+1}\gamma)\\
&\quad+\sum_{\ell,j\in \mathbb{N}(n)\atop
\ell\neq j}\cos\big(2^{\ell+1}\gamma (x+2^{\ell+1}\gamma)\big)\check{\chi}^2(x+2^{\ell+1}\gamma)\check{\chi}(x+2^{j+1}\gamma)\\
&:=\mathbf{I}_{1111}+\mathbf{I}_{1112}.
\end{align*}
Due to \eqref{o0} and the support condition of $\varphi$ and for all $k\in \mathbb{Z}$
\bbal
\varphi(2^{-k}\xi)\equiv 1\quad \text{for}\quad \xi\in  \mathcal{C}_k\equiv\left\{\xi\in\R^d: \ \frac{4}{3}2^{k}\leq |\xi|\leq \frac{3}{2}2^{k}\right\},
\end{align*}
 we have
\begin{equation*}
{\dot{\Delta}_j\mathbf{I}_{1111}=\mathcal{F}^{-1}\left(\varphi(2^{-j}\cdot)\mathcal{F}\mathbf{I}_{1111}\right)=}
\begin{cases}
\cos\big(2^{j+1}\gamma (x+2^{j+1}\gamma)\big)\check{\chi}^3(x+2^{j+1}\gamma), &\text{if}\; \ell=j,\\
0, &\text{if}\; \ell\neq j,
\end{cases}
\end{equation*}
which combining \eqref{o4} implies that
\bbal
\|\mathbf{I}_{1111}\|_{B^0_{\infty,1}(\mathbb{N}(n))}&=\sum_{j\in \mathbb{N}(n)}\left\|\cos\big(2^{j+1}\gamma (x+2^{j+1}\gamma)\big)\check{\chi}^3(x+2^{j+1}\gamma)\right\|_{L^\infty}\geq cn.
\end{align*}
Following the same procedure as $\mathbf{I}_{115}$, we get
\bbal
\|\mathbf{I}_{1112}\|_{B^0_{\infty,1}(\mathbb{N}(n))}\leq  Cn2^{-\frac{nM}{2}},
\end{align*}
Gathering all the above estimates, we obtain that for large enough $n$
\bbal
n\|u_0(\pa_xu_0)^2\|_{B^0_{\infty,1}(\mathbb{N}(n))}&\geq \|\mathbf{I}_{1}\|_{B^0_{\infty,1}(\mathbb{N}(n))}-\|\mathbf{I}_{2}\|_{B^0_{\infty,1}(\mathbb{N}(n))}\\
&\geq C(\log n)^2\f(\|\mathbf{I}_{1111}\|_{B^0_{\infty,1}(\mathbb{N}(n))}-
\|\mathbf{I}_{12},\mathbf{I}_{13},\mathbf{I}_{115},\mathbf{I}_{1112}\|_{B^0_{\infty,1}(\mathbb{N}(n))}-n2^{-n}\g)\\
&\geq cn(\log n)^2.
\end{align*}
This completes the proof of Lemma \ref{le-e2}.
\end{proof}
\begin{remark}\label{dis}
Setting $u_0=n^{-\fr12}u^{\mathrm{H}}_0$ where $u^{\mathrm{H}}_0$ is given by \eqref{H} and following the same argument as the proof of Lemma \ref{le-e2}, we can establish
\bbal
\left\|u_0\right\|_{B^1_{\infty,1}(\R)}\leq C_1n^{-\fr12}\log n\quad\text{and}\quad\left\|(\pa_xu_0)^2\right\|_{B^0_{\infty,1}\left(\mathbb{N}(n)\right)}\geq c_2(\log n)^2, \qquad n\gg1.
\end{align*}
\end{remark}

\subsection{The Equation Along the Flow}\label{sec3.2}

Given a Lipschitz velocity field $u$, we may solve the following ODE to find the flow induced by $u^2$:
\begin{align}\label{ode}
\quad\begin{cases}
\frac{\dd}{\dd t}\phi(t,x)=u^2(t,\phi(t,x)),\\
\phi(0,x)=x,
\end{cases}
\end{align}
which is equivalent to the integral form
\bal\label{n}
\phi(t,x)=x+\int^t_0u^2(\tau,\phi(\tau,x))\dd \tau.
\end{align}
Considering
\begin{align}\label{pde}
\quad\begin{cases}
\pa_tv+u^2\pa_xv=P,\\
v(0,x)=v_0(x),
\end{cases}
\end{align}
then, we get from \eqref{pde} that
\bbal
\pa_t(\De_jv)+u^2\pa_x\De_jv&=R_j+\Delta_jP,
\end{align*}
with $R_j=[u^2,\De_j]\pa_xv=u^2\De_j\pa_xv-\Delta_j(u^2\pa_xv)$.

Due to \eqref{ode}, then
\bbal
\frac{\dd}{\dd t}\left((\De_jv)\circ\phi\right)&=R_j\circ\phi+\Delta_jP\circ\phi,
\end{align*}
which means that
\bal\label{l6}
\De_jv\circ\phi=\De_jv_0+\int^t_0R_j\circ\phi\dd \tau+\int^t_0\Delta_jP\circ\phi\dd \tau.
\end{align}
\subsection{Norm Inflation}\label{sec3.3}

For $n\gg1$, we have for $t\in[0,1]$
\bbal
\|u\|_{C^{0,1}}\leq C\|u_0\|_{C^{0,1}}\leq C n^{-\frac{1}{3}}\log  n.
\end{align*}
To prove Theorem \ref{th1}, it suffices to show that there exists $t_0\in(0,\frac{1}{\log n}]$ such that
\bal\label{m}
\|u(t_0,\cdot)\|_{B^1_{\infty,1}}\geq \log\log n.
\end{align}
We prove \eqref{m} by contraction. If \eqref{m} were not true, then
\bal\label{m1}
\sup_{t\in(0,\frac{1}{\log n}]}\|u(t,\cdot)\|_{B^1_{\infty,1}}< \log\log n.
\end{align}
We divide the proof into two steps.

{\bf Step 1: Lower bounds for $(\De_ju)\circ \phi$}

Now we consider the equation along the Lagrangian flow-map associated to $u^2$.
Utilizing \eqref{l6} to \eqref{N} yields
\bbal
(\De_ju)\circ \phi&=\De_ju_0+\int^t_0R^1_j\circ \phi\dd \tau +\int^t_0\De_jF\circ \phi\dd \tau
\\&\quad +\int^t_0\big(\De_jE\circ \phi-\De_jE_0\big)\dd \tau+t\De_jE_0,
\end{align*}
where
\bbal
&R^1_j=[u^2,\De_j]\pa_xu,\qquad
F=-\Lambda^{-2}\f(\fr12(\pa_xu)^3+\pa_x(u^3)\g), \\
&E=-\frac32\pa_x\Lambda^{-2}\f(u(\pa_xu)^2\g)\quad\text{with}\quad E_0=-\frac32\pa_x\Lambda^{-2}\f(u_0(\pa_xu_0)^2\g).
\end{align*}
Due to Lemma \ref{le-e2}, we deduce
\bal\label{g1}
\sum_{j\in \mathbb{N}(n)}2^j\|\De_jE_0\|_{L^\infty}
&\approx \sum_{j\in \mathbb{N}(n)}\|\De_j\pa_xE_0\|_{L^\infty}\nonumber\\
&\geq C\sum_{j\in \mathbb{N}(n)}\f\|\De_j[u_0(\pa_xu_0)^2]\g\|_{L^\infty}\nonumber\\
&\geq c(\log n)^2.
\end{align}
Notice that \eqref{n}, then we have for $t\in(0,\frac{1}{\log n}]$,
\bbal
\frac12\leq |\pa_x\phi|\leq 2,
\end{align*}
thus
\bbal
\|f(t,\phi(t,x))\|_{L^\infty}= \|f(t,\cdot)\|_{L^\infty}.
\end{align*}
Then, using the commutator estimate from Lemma \ref{lem2.2}, we have
\bal\label{g2}
\sum_{j\geq -1}2^j\|R^1_j\circ \phi\|_{L^\infty}&\leq C\sum_{j\geq -1}2^j\|R^1_j\|_{L^\infty}\nonumber\\
&\leq C\|\pa_x(u^2)\|_{B^0_{\infty,1}}\|u\|_{B^1_{\infty,1}}\nonumber
\\
&\leq C\|u\|_{C^{0,1}}\|u\|^2_{B^1_{\infty,1}}\nonumber\\
&\leq C n^{-\frac13}(\log  n)^3.
\end{align}
Also, we have
\bal\label{g3}
\sum_{j\in \mathbb{N}(n)}2^j\|\De_jF\circ \phi\|_{L^\infty}&\leq C\sum_{j\in \mathbb{N}(n)}2^j\|\De_jF\|_{L^\infty}
\nonumber\\
&\leq C\|(\pa_xu)^3+\pa_x(u^3)\|_{L^\infty}\nonumber\\
&\leq C\|u\|^3_{C^{0,1}}
\leq C\|u_0\|^3_{C^{0,1}}\leq Cn^{-1}(\log n)^3.
\end{align}
Combining \eqref{g1}-\eqref{g3} and using Lemmas \ref{le-e1}-\ref{le-e2} yields
\bbal
\sum_{j\in \mathbb{N}(n)}2^j\|(\De_ju)\circ \phi\|_{L^\infty}
&\geq t\sum_{j\in \mathbb{N}(n)}2^j\|\De_jE_0\|_{L^\infty}-\sum_{j\in \mathbb{N}(n)}2^j\|\De_jE\circ \phi-\De_jE_0\|_{L^\infty}\\
&\quad-Cn^{-\frac{1}{3}}(\log  n)^3
-C\|u_0\|_{B^1_{\infty,1}}
\\&\geq ct\log^2n-\sum_{j\in \mathbb{N}(n)}2^j\|\De_jE\circ \phi-\De_jE_0\|_{L^\infty}-Cn^{-\frac{1}{3}}(\log  n)^3.
\end{align*}
{\bf Step 2: Upper bounds for $\De_jE\circ \phi-\De_jE_0$}

By easy computations,
$$\pa_x\Lambda^{-2}[u^2u_x^3+2u\pa_x(u^2u_x)u_x]
=\pa_x\Lambda^{-2}[2u^2u_x^3+\pa_x(u^3u_x^2)]=2\pa_x\Lambda^{-2}(u^2u_x^3)+\Lambda^{-2}(u^3u_x^2)-u^3u_x^2,$$
then we find that
\bal\label{E}
\pa_tE+u^2\pa_xE&=-\fr32\f(\pa_x\Lambda^{-2}\pa_t(uu_x^2)+u^2\pa_x^2\Lambda^{-2}(uu_x^2)\g)\nonumber\\
&=\mathrm{J}+\fr32\f(\pa_x\Lambda^{-2}[u^2u_x^3+2u\pa_x(u^2u_x)u_x]+u^3u_x^2-u^2\Lambda^{-2}(uu_x^2)\g)\nonumber\\
&=\mathrm{J}+\mathrm{K},
\end{align}
where
\bbal
&\mathrm{J}=-\frac32\pa_x\Lambda^{-2}\f(\mathbf{P}_1(u)u^2_x+\mathbf{P}_2(u)u^2_x+2\pa_x\mathbf{P}_1(u)uu_x+2\pa_x\mathbf{P}_2(u)uu_x\g),\\
&\mathrm{K}=\fr32\f(2\pa_x\Lambda^{-2}(u^2u_x^3)+\Lambda^{-2}(u^3u_x^2)-u^2\Lambda^{-2}(uu_x^2)\g).
\end{align*}
Utilizing \eqref{l6} to \eqref{E} yields
\bbal
\De_jE\circ \phi-\De_jE_0=\int^t_0[u^2,\De_j]\pa_xE\circ \phi\dd \tau +\int^t_0\De_j(\mathrm{J}+\mathrm{K})\circ \phi\dd \tau.
\end{align*}
Using the commutator estimate from Lemma \ref{lem2.2}, one has
\bbal
2^j\|[u^2,\De_j]\pa_xE\|_{L^\infty}&\leq C(\|\pa_x(u^2)\|_{L^\infty}\|E\|_{B^1_{\infty,\infty}}
+\|\pa_xE\|_{L^\infty}\|u\|_{B^1_{\infty,\infty}})\\
&\leq C\|u\|^5_{C^{0,1}}\leq Cn^{-\frac53}(\log n)^5.
\end{align*}
Due to the facts
\bbal
\|\Lambda^{-2}f\|_{L^\infty}\leq \|f\|_{L^\infty}\quad\text{and}\quad \|\pa_x\Lambda^{-2}f\|_{L^\infty}\leq \|f\|_{L^\infty}\quad \Rightarrow\quad \|\pa^2_x\Lambda^{-2}f\|_{L^\infty}\leq 2\|f\|_{L^\infty},
\end{align*}
then we have
\bbal
2^j\|\De_j\mathrm{J}\|_{L^\infty}\approx\|\pa_x\mathrm{J}\|_{L^\infty}\leq C \|u\|^5_{C^{0,1}}\leq Cn^{-\frac53}(\log n)^5.
\end{align*}
Similarly,
\bbal
2^j\|\De_j\mathrm{K}\|_{L^\infty}\leq C \|u\|^5_{C^{0,1}}\leq Cn^{-\frac53}(\log n)^5.
\end{align*}
Then, we deduce that
\bbal
2^j\|\De_jE\circ \phi-\De_jE_0\|_{L^\infty}\leq C\|u\|^5_{C^{0,1}}\leq Cn^{-\frac53}(\log n)^5,
\end{align*}
which leads to
\bbal
\sum_{j\in \mathbb{N}(n)}2^j\|\De_jE\circ \phi-\De_jE_0\|_{L^\infty}
\leq Cn^{-\frac23}(\log n)^5.
\end{align*}
Combining Step 1 and Step 2, then for $t=\frac{1}{\log n}$, we obtain for $n\gg1$
\bbal
\|u(t)\|_{B^1_{\infty,1}}&\geq \|u(t)\|_{B^1_{\infty,1}(\mathbb{N}(n))}\\
&\geq C\sum_{j\in \mathbb{N}(n)}2^j\|(\De_ju)\circ \phi\|_{L^\infty}
\\&\geq c t(\log n)^2-Cn^{-\frac23}(\log n)^5-Cn^{-\frac{1}{3}}(\log  n)^3\\
&\geq \log\log n,
\end{align*}
which contradicts the hypothesis \eqref{m1}. Thus, Theorem \ref{th1} is proved.{\hfill $\square$}
\section{Discussion}
\quad  By the clever Lagrangian coordinate transformation used in \cite{Guo} and constructing a new initial data, we prove that the Novikov equation is ill-posed in critical Besov spaces $B^{1}_{\infty,1}(\R)$. Thus our results (Theorem \ref{th1} and Corollary \ref{co}) indicate that the local well-posedness and ill-posedness for the Novikov equation in all
critical Besov spaces $B^{1+1/p}_{p,r}(\R)$ have been solved completely. Since the Novikov equation has cubic nonlinear term, we expect that
norm inflation is stemmed from the worst term $u(\pa_xu)^2$, which is different from the quadratic term $(\pa_xu)^2$ for the Camassa-Holm equation. Our new idea is to construct a initial data which includes two parts, one of whose Fourier transform is supported at high frequencies and the other is supported at low frequencies. Then the cubic nonlinear term $u(\pa_xu)^2$ will generate the low-high-high frequency interaction, which contributes a large quantity lead to the norm inflation. Lastly, we should mention that, by dropping the low frequency term, the initial data $u_0=2^{-\fr{n}2}u_0^{\mathrm{H}}$ can be as an example which leads to the norm inflation for the Camassa-Holm  equation (see Remark \ref{dis}) .
\section*{Acknowledgments}
J. Li is supported by the National Natural Science Foundation of China (11801090 and 12161004) and Jiangxi Provincial Natural Science Foundation (20212BAB211004). Y. Yu is supported by the National Natural Science Foundation of China (12101011) and Natural Science Foundation of Anhui Province (1908085QA05). W. Zhu is supported by the National Natural Science Foundation of China (12201118) and Guangdong
Basic and Applied Basic Research Foundation (2021A1515111018).

\section*{Data Availability} No data was used for the research described in the article.

\section*{Conflict of interest}
The authors declare that they have no conflict of interest.

\addcontentsline{toc}{section}{References}

\end{document}